\documentclass[final]{IEEEtran}

\usepackage{amsthm}
\usepackage{braket,amsfonts,amsopn}  
\usepackage{dsfont}  
\usepackage{xcolor}
\usepackage{amsmath,amssymb}
\usepackage{hyperref}
\usepackage{circuitikz}
\usepackage{graphicx}

\usepackage[acronym]{glossaries}

\usepackage{comment}
\usepackage{verbatim}

\newcommand{\be}{\begin{equation}}
\newcommand{\ee}{\end{equation}}

\newcommand {\R}{\mathbb R}

\newcommand {\C}{\mathbb C}

\renewcommand{\Re}{\operatorname{Re}}
\renewcommand{\Im}{\operatorname{Im}}
\newcommand{\trace}{\operatorname{tr}}
\newcommand{\diag}{\operatorname{diag}}

\newacronym{rfm}{RFM}{Ribosome Flow Model}
\newacronym{pmp}{PMP}{Pontryagin Maximum Principle}
\newacronym{tas}{TASEP}{Totally Asymmetric Simple Exclusion Process}

\title{On the gain of entrainment in stable  linear control systems with a nonlinear output\thanks{RM and MM are with the School of ECE, Tel Aviv University, Tel Aviv 69978, Israel.   
This research   is partially supported by
the Israel Science Foundation (grant number 221/24).  
Correspondence: michaelm@tauex.tau.ac.il 
}}

 \author{Ram Massas  and 
 Michael~Margaliot}

\newtheorem{Theorem}{Theorem}
\newtheorem{lemma}[Theorem]{Lemma}
\newtheorem{proposition}[Theorem]{Proposition}
\newtheorem{corollary}[Theorem]{Corollary}

\newtheorem{Remark}[Theorem]{Remark}

\newtheorem{example}[Theorem]{Example}
\AtBeginEnvironment{example}{%
  \pushQED{\qed}%
}
\AtEndEnvironment{example}{\popQED\endexample}

\begin{document}

	\maketitle

\begin{abstract}
   A control system admits a positive gain of entrainment~(GOE) if entrainment to a periodic input yields a larger output, on average, than the output generated by the corresponding constant input with the same mean value. We analyze~GOE in continuous-time stable linear control systems with a static  nonlinear output map. Although linear systems with linear outputs   have zero~GOE, we show that a nonlinear output may generate a nontrivial GOE through the mismatch between the average output along the entrained periodic orbit  and the output evaluated at the corresponding averaged equilibrium.

We derive  a second-order characterization of GOE for smooth output maps 
revealing  that the leading-order contribution is determined by the curvature of the output map. We then show that if the output is convex (concave) on the controllable subspace, then GOE is nonnegative (nonpositive) for every periodic input. Furthermore, GOE admits a natural geometric interpretation as the average Bregman divergence between the entrained periodic orbit and the equilibrium associated with the averaged input.

For the special case of quadratic output functions, we derive explicit  frequency-domain formulas for~GOE. These yield necessary and sufficient conditions guaranteeing the sign of GOE, characterize the contribution of individual input harmonics, and lead to an optimal periodic excitation that maximizes~GOE under an energy constraint. The theoretical results are illustrated using an electrical RLC circuit and a compartmental pharmacodynamic model with a nonlinear drug-effect map. 

\end{abstract}

\begin{IEEEkeywords} Convexity, periodic  forcing,   Wiener system, Bregman divergence. 
\end{IEEEkeywords}


\section{Introduction}

Consider the   nonlinear control system
\begin{align}\label{eq:nonlinsys}
    \dot{x}(t) &= f(x(t),u(t)),  \\
    y(t) &= h(x(t),u(t)), \nonumber
\end{align}
where \(x(\cdot)\in\mathbb{R}^n\) is the state,
\(u(\cdot)\in\mathbb{R}^m\) is the input, and~\(y(\cdot)\in\mathbb{R}\) is a scalar output. 
We say that the system \emph{entrains} to periodic excitations if, for every $T$-periodic control input~$u$, the system admits a unique $T$-periodic solution~$\gamma^u$ that is globally attracting. In other words, $x(t,u,x_0)$ converges to~$\gamma^u$ for any initial condition~$x_0$.

Entrainment is an important property  in numerous engineering and biological systems. In electrical engineering, synchronous generators must entrain to the frequency of the power grid. In neuroscience, populations of neurons   synchronize with periodic external stimuli. In biology,   cellular and physiological processes entrain to periodic environmental or internal signals like  the 24-hour solar day or the  periodic cell-division cycle.   

Entrainment is not a generic property of  nonlinear control systems, as periodic excitation may lead to complex dynamical behaviors including multi-stability, subharmonic oscillations, and  chaos~\cite{Takac1992_subharmonic,subharmincs_sontag_2018}. One important class of nonlinear systems for which entrainment can be established rigorously is the class of contractive systems~\cite{sontag_cotraction_tutorial,strom1975logarithmic}, 
 and this has
 found many applications~\cite{entrain_trans,entrainME,RFM_entrain}.

Assume from here on that our goal is to maximize (in some appropriate sense) the scalar output~$y$. 
Let \(v:[0,\infty ) \to\mathbb{R}^m\) be a nonconstant \(T\)-periodic input, and denote
its average value by
\begin{equation}
    \bar v := \frac{1}{T}\int_0^T v(t)\,dt .
\end{equation}
If the system~\eqref{eq:nonlinsys}
entrains, then in response to the control~$u(t)=v(t)$
every solution will converge
to a unique~$T$-periodic orbit~$\gamma^v$, whereas for the 
constant control~$u(t)= \bar v$ every solution will converge to a unique  equilibrium~$e ^{\bar v} $. In the first case, the output will converge to the $T$-periodic  pattern~$ h(\gamma^v(t),v(t))$, and in the second to the constant value~$h(e^{\bar v},\bar v)$. 
The \emph{gain of entrainment}~(GOE) for the control~$v$ is:
\[
\text{GOE}(v): = 
\frac1{T}\int_0^T h(\gamma^v(t),v(t) ) \, dt -h(e^{\bar v},\bar v) 
\]
(see Figure~\ref{fig:goe_schematic}).
If this quantity is positive then entrainment does not only yield  synchronization to the periodic excitation~$v$, but also a larger output, on average.

Note that in the definition of GOE  we make a ``fair comparison'' by considering the effect of  two controls~$u(t)=v(t)$  and~$u(t)=
\bar v$, which share the same time average.

GOE may be important in numerous applications including
periodic fishery, periodic production of proteins, 
and periodic medication protocols. 
To illustrate the idea, consider traffic flow regulated by a sequence of $T$-periodic traffic lights. Assume that the vehicle dynamics converge to a $T$-periodic regime. It is then natural to ask whether, by appropriately coordinating the periodic switching patterns of the lights, one can achieve a higher average flow than that obtained under constant (time-invariant) signal settings with the same average green-time allocation.

\begin{figure}[t]
    \centering
    \includegraphics[width=0.95\linewidth]{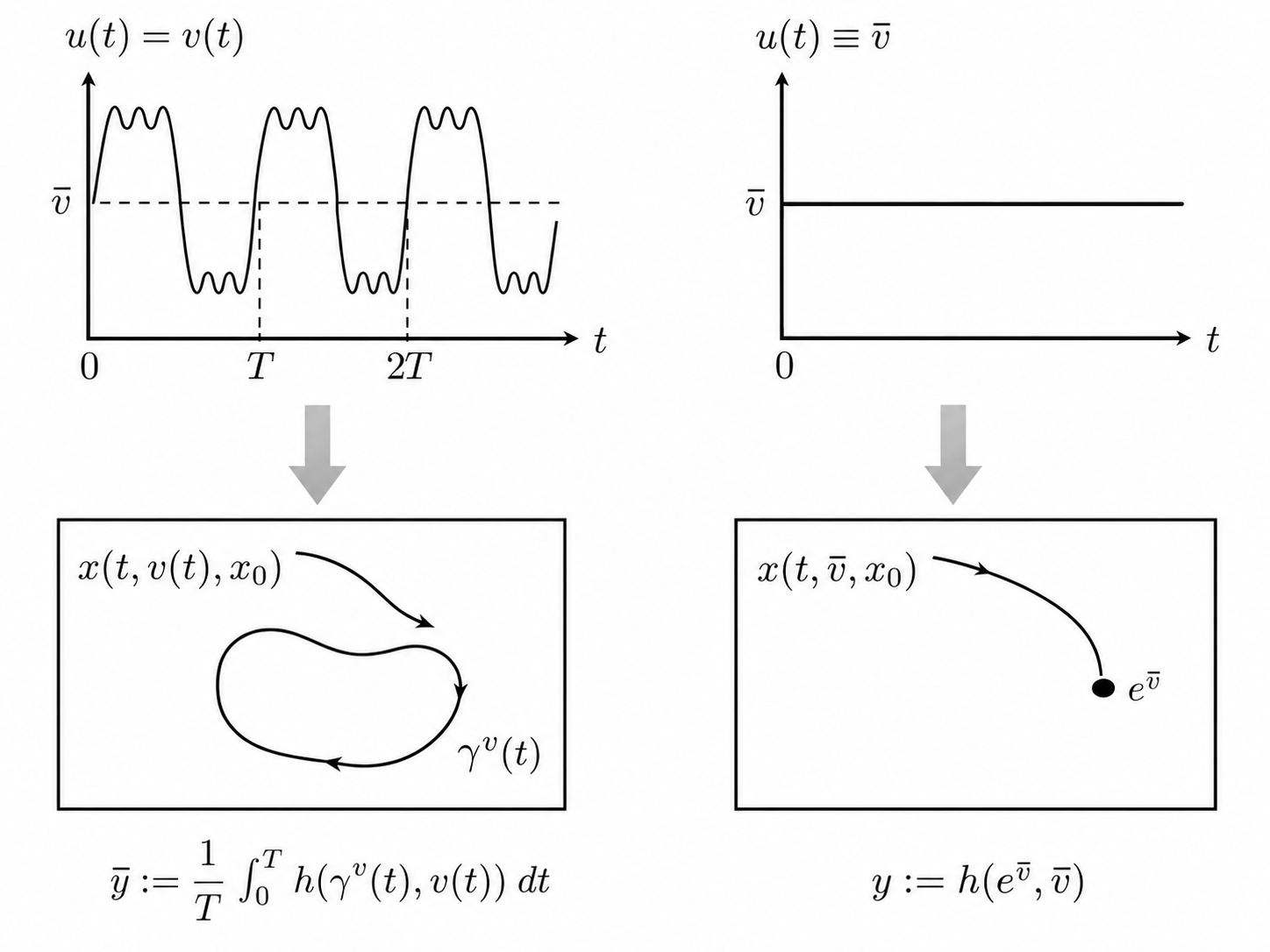}
    \caption{Gain of entrainment~(GOE) in a system with a scalar  output~\(y=h(x,u)\). We compare the effect of two controls with the same average
    value. Left: \(u(t)=v(t)\) is a \(T\)-periodic control,  \(\gamma^v(t)\) 
  is the corresponding globally attracting periodic
    trajectory, 
    and~\(\bar y\) is the average value of the output 
    along~\(\gamma^v\). Right: \(u(t)\equiv \bar v\), where~\(\bar v:=\frac{1}{T}\int_0^T v(t)\,dt\), is the corresponding constant
    control and~\(e^{\bar v}\) is the corresponding globally attracting equilibrium.
    The system admits a positive~GOE for the control~\(v\) if~\(\bar y>h(e^{\bar v},\bar v)\).}
    \label{fig:goe_schematic}
\end{figure}

 Another important  application is photosynthesis, i.e. the process of converting light radiation to  chemical energy. Natural light radiation is   periodic  with a period of 24h, and in photobioreactors light can be artificially modulated in a time-periodic manner. 
As noted in~\cite{Nedbal2026_photo_syn}: ``Photosynthetic yield, productivity, and stress resilience depend not only on the mean intensity of
photosynthetically active radiation, but also on its temporal variability.''
Thus, a natural question is whether    a periodic  light radiation pattern 
increase the average productivity  in photosynthesis.

Since constant controls are included as a special case of periodic controls,
one might expect that periodic operation  always   improves performance. However, this is not necessarily true. 
For linear control systems with a linear output  the 
average output generated by a periodic input is equal to the output generated
by the corresponding constant averaged input. 
Therefore, such systems have zero~GOE.  For an important nonlinear model from systems biology, known as the ribosome flow model~\cite{margaliot2012stability,Raveh2016}, GOE has been analyzed in several special cases, and evidence suggests that it is always negative~\cite{GOE_RFM_2023}.
 The recent paper~\cite{katz2023gainentrainmentclassweakly} showed that, for a broad class of nonlinear systems, GOE is inherently a high-order effect with respect to the control amplitude. More specifically, when the amplitude of the periodic excitation is small, the resulting change in the average output appears only in the second- or higher-order terms of the amplitude expansion, whereas the  linear term vanishes. These results suggest that~GOE may be more subtle and elusive than initially expected.

In this note, we study GOE in a system
with a linear 
state dynamics, but
nonlinear   output. More precisely, we consider the   multiple-input
single-output~(MISO) linear control system:
\begin{align}\label{eq:intro_miso_system}
    \dot{x}(t) &= Ax(t)+Bu(t), \\
    y(t) &= h(x(t)), \nonumber
\end{align}
where \(x(\cdot)\in\mathbb{R}^n\), \(u(\cdot)\in\mathbb{R}^m\),
\(y(\cdot)\in\mathbb{R}\),   and
\(h:\mathbb{R}^n\to\mathbb{R}\) is a nonlinear output map.  The  set  of admissible controls~$\mathbb U$ is the set of   measurable and essentially bounded functions~$u:[0,\infty)\to\R^m$. 
We assume throughout that~$A$ is Hurwitz, so the system entrains.

This class of systems 
is sometimes called   Wiener systems, i.e., a linear dynamical  system followed by a static nonlinear output map (see e.g. \cite{wiener_systems_book}).
They 
arise  naturally in many applications. For example, in
linear electrical circuits, the state may consist of capacitor voltages  and
inductor currents, while the output may represent  the
stored 
energy, which is a quadratic function of the state. Similarly, in mechanical
systems, displacement and velocity may obey linear mass-spring-damper
dynamics, whereas the total mechanical energy is the sum of quadratic kinetic
and elastic potential energies. Other examples include 
linear systems with nonlinear performance criteria, and nonlinear systems with a
linearized   dynamics  in the vicinity of an operating point.

 This note analyzes GOE in stable Wiener systems. We show that, although linear systems with linear outputs   exhibit zero~GOE, a nonlinear output can generate a nontrivial GOE through the mismatch between the output evaluated along the entrained periodic trajectory and the output evaluated at the corresponding averaged equilibrium.  We   derive a second-order characterization of GOE for smooth output functions, showing that the first-order contribution vanishes and that the leading-order behavior is determined by the curvature of the output map. For convex (concave) outputs, we prove that GOE is always nonnegative (nonpositive), and we further show that GOE admits a natural interpretation as an average Bregman divergence. We then focus on the case of  quadratic outputs, and  obtain explicit state-space and frequency-domain formulas that reveal how individual harmonics contribute to GOE and provide conditions guaranteeing its sign. 
 The theoretical results are illustrated  using two applications. The first is a series RLC circuit driven
by a periodic voltage source. The second is a linear compartmental pharmacodynamic model with a nonlinear dose-response map.    

We use standard notation. Vectors [matrices] are denoted by small [capital] letters. The transpose of a matrix~$A$ is~$A^\top$. 
The trace  of a square matrix~$A$ is~$\trace(A)$. We will use the cyclic property of the trace operator:
\[
\trace(AB)=\trace(BA) 
\]
for any~$A\in\C^{n\times m} $ and~$B\in\C^{m\times n}$.
The $n\times n$ identity matrix is~$I_n$. 
We use~$\mathrm{i}$ to denote the imaginary unit.
For a complex number~$z\in \C$,  $\Re(z)$ [$\Im(z)$] is the real  
[imaginary] part of~$z$.
The polar representation of the  complex number~$a+\mathrm{i}b$ is~$r\exp(\mathrm{i}\theta)$ with~$r= (a^2+b^2)^{1/2}$ and~$\theta=\arctan(b/a)$.
For a complex matrix~$Z\in\C^{n\times m} $, we use~$Z^*$ to denote the conjugate transpose 
of~$Z$. For a $T$-periodic function~$z:[0,\infty)\to\R^\ell$, we use~$\bar z$ to denote the time-average of~$z$, that is,~$\bar z:=\frac1{T}\int_0^T z(t)\; dt$. 

 \section{Main Results}

Fix a nonconstant $T$-periodic control~$v\in\mathbb U$, and denote its  time average by~$\bar v$. Then for the control~$u(t)=v(t)$ any solution of~\eqref{eq:intro_miso_system}
converges to a unique bounded $T$-periodic orbit~$\gamma^v$, whereas
for the control~$u(t)=\bar v$ any solution of~\eqref{eq:intro_miso_system}
converges to an equilibrium~$e^{\bar v}$. 
 
 Let
\[
\mathcal C := \operatorname{range}\{B,AB,\dots,A^{n-1}B\}
\]
denote the controllable subspace of~\eqref{eq:intro_miso_system} (see, e.g.,~\cite[Chapter~3]{sontag_book}).
Since 
\[
x(t)=\exp(At)x(0)+\int_0^t \exp(A(t-s))Bu(s) \, ds,
\]
and~$A$ is Hurwitz, we have that
\be\label{eq:inst}
e^{\bar v} \in \mathcal C \text{ and }\gamma^v(t) \in \mathcal C \text{  for all }t\geq 0.
\ee

We begin with a simple but useful result showing
that 
for a  stable linear system, the time average  of the entrained periodic orbit equals the equilibrium corresponding to the averaged input.

\begin{lemma}\label{lemma:simple}
   We have 
\begin{equation}
    \frac{1}{T}\int_0^T \gamma^v(t)\,dt
    =
    e^{\bar v}.
\end{equation}
\end{lemma}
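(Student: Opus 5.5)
The plan is to integrate the state equation along the periodic orbit over one period and use periodicity to kill the derivative term. Since $\gamma^v$ is a $T$-periodic solution of~\eqref{eq:intro_miso_system} with $u=v$, we have $\dot\gamma^v(t)=A\gamma^v(t)+Bv(t)$ for almost all $t$. Here $v$ is only measurable and essentially bounded, so $\gamma^v$ is absolutely continuous and the identity holds a.e.; integration is still justified.

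Integrating from $0$ to $T$ and dividing by $T$ gives
\[
\frac1T\bigl(\gamma^v(T)-\gamma^v(0)\bigr)=A\Bigl(\frac1T\int_0^T\gamma^v(t)\,dt\Bigr)+B\bar v .
\]
By $T$-periodicity the left-hand side is zero. Writing $\bar\gamma:=\frac1T\int_0^T\gamma^v(t)\,dt$, this yields $A\bar\gamma+B\bar v=0$.

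Next we identify the equilibrium. The equilibrium $e^{\bar v}$ for the constant control $\bar v$ satisfies $Ae^{\bar v}+B\bar v=0$. Because $A$ is Hurwitz, it has no zero eigenvalue and is invertible, so the equation $Ax=-B\bar v$ has the unique solution $e^{\bar v}=-A^{-1}B\bar v$. Since $\bar\gamma$ satisfies the same equation, it follows that $\bar\gamma=e^{\bar v}$.

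There is no real obstacle. The only points needing care are two. First, the fundamental theorem of calculus must be applied to the absolutely continuous $\gamma^v$ rather than to a $C^1$ function. Second, existence of the periodic orbit should be taken from the entrainment discussion. If one wants it explicitly, it is $\gamma^v(0)=(I_n-\exp(AT))^{-1}\int_0^T\exp(A(T-s))Bv(s)\,ds$, where $I_n-\exp(AT)$ is invertible because $\exp(AT)$ has spectral radius less than one.
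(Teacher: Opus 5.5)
Your proposal is correct and follows essentially the same argument as the paper: integrate $\dot\gamma^v=A\gamma^v+Bv$ over one period, use periodicity to get $A\overline{\gamma^v}+B\bar v=0$, and invert the Hurwitz matrix $A$ to obtain $-A^{-1}B\bar v=e^{\bar v}$. Your added remarks on absolute continuity for measurable $v$ and the explicit formula for $\gamma^v(0)$ are correct refinements. The paper identifies $e^{\bar v}$ by applying the same computation to the constant control $u\equiv\bar v$, whereas you use the equilibrium equation directly; the difference is immaterial.
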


Geometrically, the lemma implies that if one constructs a convex ``ball'' whose boundary contains the   trajectory~$\gamma^v(t)$, $t\in[0,T)$,  then~$e^{\bar v}$ necessarily lies in   this ball.

\begin{IEEEproof}
Along the periodic solution, we have 
\begin{equation}
    \dot{\gamma}^v(t)=A\gamma^v(t)+Bv(t),
\end{equation}
and averaging  over one period gives
\[
    \frac1{T} \int_0^T \dot{\gamma}^v(t)\,dt
    =\frac1{T} 
    A\int_0^T \gamma^v(t)\,dt
    +\frac1{T} 
    B\int_0^T v(t)\,dt.
\]
Since \(\gamma^v\) is \(T\)-periodic,
$
    \int_0^T \dot{\gamma}^v(t)\,dt
    =
    \gamma^v(T)-\gamma^v(0)
    =
    0$, so 
  $  0
    = 
    A   \overline{\gamma^v} 
    +  
    B  \overline{v}$,
 and thus 
\begin{equation}
    \overline{\gamma^v}
    =
    -A^{-1}B\bar v
    .
\end{equation}
This holds for any~$T$-periodic control and, in particular, for the constant control~$u(t)=\bar v$, so we conclude that
\[
 \overline {    e^{\bar v}  } =    e^{\bar v}\nonumber \\
 = -A^{-1}B\bar v ,
\]
and this completes the proof. 
\end{IEEEproof}

\subsection{Second-order perturbation analysis of   GOE}
It is useful to view the~$T$-periodic control  as a perturbation added to a constant control.
      Consider the control
\begin{equation}
    u_\varepsilon(t) =\bar v+\varepsilon w(t),
    \label{eq:eps_input}
\end{equation}
where~$\varepsilon\in\R\setminus\{0\}$,
$\bar v\in\R^m\setminus\{0\}$, 
and~$ w:[0,\infty)\to\mathbb{R}^m $
is \(T\)-periodic with zero time average, that is,
$
    \frac{1}{T}\int_0^T w(t)\,dt=0.
$   \begin{proposition}\label{prop:perturb}
Assume that the output function~$h$ is~$C^2$.
Let~$\gamma^w$ denote the unique \(T\)-periodic solution of~\eqref{eq:intro_miso_system}
corresponding to the $T$-periodic control~$u=w$. 
  Then
\begin{align}
\text{GOE}(u _ \varepsilon) 
&= \frac{\varepsilon^2}
{2 T}\int_0^T  (\gamma^w(t) )^\top     \nabla^2 h(e^{\bar v}) \gamma^w(t) \, dt  +o(\varepsilon^2)\nonumber \\
&=\frac{\varepsilon^2}{2}\trace((\nabla^2 h(e^{\bar v}) ) \Sigma^{w})+o(\varepsilon^2),\label{eq:prop2_second}
\end{align}
where for any \(T\)-periodic control~\(u\), the matrix
\(\Sigma^{u}\in\R^{n\times n}\) is defined by
\be\label{eq:mat_E}
\Sigma^{u}:= \frac{1}{T}\int_0^T  
    \left(\gamma^{u}(t)-e^{\bar u}\right)
    \left(\gamma^{u}(t)-e^{\bar u}\right)^\top 
    \,dt.
\ee
\end{proposition}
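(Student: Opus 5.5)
By linearity of the state equation and uniqueness of the entrained orbit, the periodic orbit generated by $u_\varepsilon=\bar v+\varepsilon w$ is
\[
\gamma^{u_\varepsilon}(t)=e^{\bar v}+\varepsilon\gamma^w(t).
\]
To verify this, note that $e^{\bar v}+\varepsilon\gamma^w$ is $T$-periodic. It also satisfies the dynamics with input $u_\varepsilon$, because $Ae^{\bar v}+B\bar v=0$. Since $A$ is Hurwitz, the periodic solution is unique, so it must coincide with $\gamma^{u_\varepsilon}$.

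Because $\bar w=0$, the average of $u_\varepsilon$ is $\bar v$, and the corresponding equilibrium is $e^{\bar v}$. Lemma~\ref{lemma:simple}, applied to the control $w$, gives $\overline{\gamma^w}=e^{\bar w}=e^{0}=0$. Hence
\[
\text{GOE}(u_\varepsilon)=\frac1T\int_0^T\big[h(e^{\bar v}+\varepsilon\gamma^w(t))-h(e^{\bar v})\big]\,dt .
\]

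Next I Taylor-expand $h$ at $e^{\bar v}$ using the $C^2$ assumption:
\[
h(e^{\bar v}+\varepsilon z)=h(e^{\bar v})+\varepsilon\nabla h(e^{\bar v})^\top z+\tfrac{\varepsilon^2}{2}z^\top\nabla^2h(e^{\bar v})z+R(\varepsilon z),
\]
where $|R(\xi)|\le \tfrac12\|\xi\|^2\sup_{\|\eta\|\le\|\xi\|}\|\nabla^2h(e^{\bar v}+\eta)-\nabla^2h(e^{\bar v})\|$. This bound follows from the integral (or mean-value) form of the remainder.

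Averaging over one period, the first-order term is $\varepsilon\nabla h(e^{\bar v})^\top\overline{\gamma^w}$, which vanishes because $\overline{\gamma^w}=0$. The second-order term gives the first expression in~\eqref{eq:prop2_second}.

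The main point requiring care is uniformity of the remainder in $t$. Since $w$ is essentially bounded, $\gamma^w$ is continuous and periodic, so $M:=\sup_t\|\gamma^w(t)\|<\infty$. The continuity of $\nabla^2 h$ at $e^{\bar v}$ then gives
\[
\sup_t|R(\varepsilon\gamma^w(t))|\le \tfrac12\varepsilon^2M^2\,\omega(|\varepsilon|M),
\]
where $\omega(\delta)\to0$ as $\delta\to0$. Hence the averaged remainder is $o(\varepsilon^2)$.

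For the trace form, I apply the cyclic property of the trace:
\[
z^\top Hz=\trace(Hzz^\top).
\]
Averaging over a period yields $\trace\big(\nabla^2h(e^{\bar v})\,\Sigma^w\big)$ directly from the definition~\eqref{eq:mat_E}, because $e^{\bar w}=0$.
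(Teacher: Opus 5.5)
Your proposal is correct and follows the same route as the paper: identify $\gamma^{u_\varepsilon}=e^{\bar v}+\varepsilon\gamma^w$ by uniqueness of the periodic solution, Taylor-expand $h$ at $e^{\bar v}$, eliminate the first-order term because $\gamma^w$ has zero mean (Lemma~\ref{lemma:simple}), and obtain the trace form from the cyclic property of the trace. Your uniform-in-$t$ bound on the Taylor remainder, which uses boundedness of $\gamma^w$ and continuity of $\nabla^2 h$ at $e^{\bar v}$, is a useful rigorous addition that the paper leaves implicit.
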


Note that $\Sigma^{u}$ is the ``covariance matrix'' of the entrained periodic orbit~$\gamma^u$
around its mean \(e^{\bar u}\).
       Proposition~\ref{prop:perturb}  shows in particular
       that   the first-order term in~$\varepsilon
       $ in the~GOE   
vanishes, so for
 a small~$\varepsilon$
the sign of  
GOE is determined by the curvature of the output function integrated along the direction~$\gamma^\omega (t)$
which, as shown in the proof below, is the direction~$\gamma^{ u_\varepsilon}(t)-e^{\bar v} $.

\begin{IEEEproof}
Write the periodic solution    as
$
\gamma^{u_\varepsilon}(t)=e^{\bar v}+\varepsilon z(t)  
$,  that is, 
\[
z(t):=\frac1{\varepsilon}(\gamma^{u_\varepsilon}(t)-e^{\bar v}).
\]
Then~$z$ is~$T$-periodic,  and
\begin{align*}
     \varepsilon \dot z &=  A\gamma^{u_\varepsilon} +B u_\varepsilon\\
     &=
     A (\varepsilon z+e^{\bar v}) +B (\bar v+\varepsilon w)\\
     &=\varepsilon A z +\varepsilon B w, 
\end{align*}
so  \(z=\gamma^w\).

A
 Taylor series expansion gives
\begin{align}\label{eq:taylpo}
h(\gamma^{u_\varepsilon}) &= h( e^{\bar v} + \varepsilon z )\nonumber\\
&= h(e^{\bar v}) + 
\varepsilon (\nabla h( e^{\bar v}) )^\top z + \frac{\varepsilon^2}{2} z^\top \nabla^2 h(e^{\bar v}) z+o(\varepsilon^2),
\end{align}
and  
\begin{align*}
\text{GOE} &(u_\varepsilon)=  \frac1{T}\int_0^T h(\gamma^{u_\varepsilon} (t) )\, dt -h(e^{\bar v})\\
&= 
\frac1{T}\int_0^T
\bigl( 
\varepsilon (\nabla h(e^{\bar v}) )^\top z(t) + \frac{\varepsilon^2}{2} z^\top (t)\nabla^2 h(e^{\bar v}) z(t) \bigr) \, dt \\&+o(\varepsilon^2) .
\end{align*}
By the definition of~$z$ and Lemma~\ref{lemma:simple},  $z$ has  time average zero, and using this yields the first equation in~\eqref{eq:prop2_second}. To prove the second equation, note that
\begin{align*}
    \int_0^T z^\top (t)\nabla^2 h(e^{\bar v}) z(t) \, dt & =
\int_0^T    \trace\bigl (z^\top (t)\nabla^2 h(e^{\bar v}) z(t) \bigr)\, dt\\
&= 
\int_0^T    \trace\bigl ( \nabla^2 h(e^{\bar v}) z(t)z^\top (t) \bigr ) \, dt \\
&=
\trace\bigl ( \nabla^2 h(e^{\bar v})
\int_0^T      z(t)z^\top (t)   \, dt
\bigr ),
\end{align*}
where we used the cyclic property of the trace operator. 
\end{IEEEproof}

\subsection{Convexity, Bregman divergence,  and  GOE}
The analysis above   suggests that convexity [concavity] of $h$ is related to the sign of~GOE. The next result formalizes this. 

\begin{proposition}\label{prop:convex_h}
 Assume that the function~$t\to h(\gamma^u(t))$ is integrable on~$[0,T)$. 
If~$h$ is 
  convex [concave]  when restricted to the controllable subspace~$\mathcal C$ then~$\text{GOE}(v)\geq 0$ 
[$\text{GOE}(v)\leq 0$] for any~$T$-periodic control~$v$.
If convexity [concavity] is replaced  by strict convexity [concavity] then the inequalities    become strict for
  any $T$-periodic control~$v$ such that~$\gamma^v $ is not constant.
\end{proposition}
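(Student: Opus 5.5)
The plan is to apply Jensen's inequality to the entrained orbit, using Lemma~\ref{lemma:simple} to identify the mean of the orbit with $e^{\bar v}$. By Lemma~\ref{lemma:simple}, $\overline{\gamma^v}=e^{\bar v}$, so
\[
\text{GOE}(v)=\frac1T\int_0^T h(\gamma^v(t))\,dt - h\Bigl(\frac1T\int_0^T \gamma^v(t)\,dt\Bigr).
\]
Since $\gamma^v(t)\in\mathcal C$ for all $t$ by~\eqref{eq:inst}, and $e^{\bar v}\in\mathcal C$, only the restriction of $h$ to $\mathcal C$ matters. Regard $\frac{1}{T}dt$ on $[0,T)$ as a probability measure and $\gamma^v$ as a bounded measurable random variable with values in the convex set $\mathcal C$. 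Jensen's inequality for the convex function $h|_{\mathcal C}$ then gives $\text{GOE}(v)\ge 0$, and the concave case follows by applying this to $-h$.

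To avoid invoking the vector-valued Jensen inequality as a black box, I would prove it with a supporting affine minorant. A convex function on the finite-dimensional subspace $\mathcal C$ is finite everywhere on it, hence continuous, and has a nonempty subdifferential at every point of $\mathcal C$. So there is $g\in\mathcal C$ with
\[
h(x)\ge h(e^{\bar v})+g^\top (x-e^{\bar v})\quad \text{for all }x\in\mathcal C.
\]
Substituting $x=\gamma^v(t)$ and averaging over $[0,T)$ makes the linear term vanish by Lemma~\ref{lemma:simple}, which leaves $\frac1T\int_0^T h(\gamma^v)\,dt\ge h(e^{\bar v})$. The integrability assumption in the statement ensures the left side is well defined; in fact continuity of $h|_{\mathcal C}$ together with boundedness of $\gamma^v$ already gives it.

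For the strict case I would set $\phi(t):=h(\gamma^v(t))-h(e^{\bar v})-g^\top(\gamma^v(t)-e^{\bar v})$. This function is nonnegative, and by strict convexity $\phi(t)>0$ whenever $\gamma^v(t)\neq e^{\bar v}$. The integral of $\phi$ equals $T\cdot\text{GOE}(v)$. If $\gamma^v$ is not constant, then it is not identically equal to its mean $e^{\bar v}$. Because $\gamma^v$ is continuous as a solution of the ODE, the set $\{t:\gamma^v(t)\ne e^{\bar v}\}$ is a nonempty open set and so has positive measure. On that set $\phi$ is positive and continuous, so $\int_0^T\phi>0$, which gives $\text{GOE}(v)>0$.

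I expect the main obstacle to be this strict case. The issue is to make sure non-constancy of $\gamma^v$ forces $\gamma^v(t)\neq e^{\bar v}$ on a set of positive measure. Continuity of $\gamma^v$, which holds even for merely measurable bounded $v$, settles it. A secondary technical point is justifying the existence of a subgradient at $e^{\bar v}$. This works because the domain is the whole subspace $\mathcal C$, so $e^{\bar v}$ lies in its relative interior.
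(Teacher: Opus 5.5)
Your proposal is correct and takes the same route as the paper: Jensen's inequality for $h$ restricted to $\mathcal C$ (using~\eqref{eq:inst}), combined with Lemma~\ref{lemma:simple} to identify $\overline{\gamma^v}$ with $e^{\bar v}$. Your subgradient proof of Jensen and your continuity argument for the strict case fill in details the paper only asserts, and you are right that the integrability hypothesis holds automatically when $h$ is convex on $\mathcal C$.
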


This implies in particular a robustness property of the~GOE, namely,  if~$h$ is a convex [concave] function then  $\text{GOE}(v)\geq 0$
[$\text{GOE}(v)\leq 0$]
regardless of the exact entries in the matrices~$A$ and~$B$ in~\eqref{eq:intro_miso_system} (as long as~$A$ is Hurwitz). 

Note also that a linear output function is both convex and concave so for such a function we conclude that~$\text{GOE}(v)=0$. 
\begin{IEEEproof}
If \(h\) is
  convex   when restricted to~$\mathcal C$  
then using~\eqref{eq:inst} 
and Jensen's inequality yields 
\begin{align}\label{eq:convx}
\bar  y( v)&:=   \frac{1}{T}\int_0^T h(\gamma^v(t))\,dt\nonumber \\
    & \geq 
    h\left(
    \frac{1}{T}\int_0^T \gamma^v(t)\,dt
    \right)\\
    &=h( \overline{\gamma^v}) .\nonumber
\end{align}
Using Lemma~\ref{lemma:simple} gives 
 $   \bar y(v)\geq h(e^{\bar v})$, so 
 $
    \mathrm{GOE}(v) \geq 0.
$
If \(h\) is
strictly convex   when restricted to~$\mathcal C$ and 
\(\gamma^v\) is not constant   
then  the inequality in~\eqref{eq:convx} becomes strict. 
 A   similar argument proves the results in the concave case.   
\end{IEEEproof}

\begin{example}
    Consider the system~\eqref{eq:intro_miso_system} with~$n=2$, $m=1$, $A=\begin{bmatrix}
        -2&1\\1&-2
    \end{bmatrix}$, $B =\begin{bmatrix}
        1\\1
    \end{bmatrix}$, and~$h(x)=x_1 x_2$. 
The controllable subspace is~$\mathcal C=\text{span}(\begin{bmatrix}
    B & AB 
\end{bmatrix})=\text{span}(\begin{bmatrix}
    1\\1
\end{bmatrix})$. The output function~$h$ is 
not convex nor concave on~$\R^2$, but when restricted to~$\mathcal C$  we have~$h(x)=x_1^2$, 
which is convex. Thus, Proposition~\ref{prop:convex_h} implies that~$\text{GOE}(v)\geq 0$. Only the values of $h$ on $\mathcal C$ matter because every periodic orbit and equilibrium lies in~$\mathcal  C$.
\end{example}

\begin{Remark}
Proposition~\ref{prop:convex_h} shows that convexity of~$h$ implies that~$GOE(v)\geq 0$ for all~$v$. This raises the question   whether convexity of~$h$ is also necessary in order for nonnegativity of~GOE for any~$v$. We will see in Example~\ref{eq:exa_posi_non_concvex} below that this is not the case.
\end{Remark}

Recall that if~\(g:\mathbb R^n\to\mathbb R\) is $C^1$  
then the function~$D_g:\R^n\times\R^n\to\R$
defined by
\begin{equation}\label{eq:bregman}
D_g(x,z):=
g(x)-g(z)-(\nabla g(z))^\top (x-z).
\end{equation}
is called the
Bregman divergence associated with~\(g\)~\cite{bregman1967}.
Geometrically, $D_g(x,z)$
  measures how much the graph of $g$ lies above its tangent plane at~$z$. 

\begin{proposition}
Suppose that~$h \in C^1$. Then 
\[
GOE(v)=\frac1T\int_0^T
D_h(\gamma^v(t),e^{\bar v})\,dt.   
\]
\end{proposition}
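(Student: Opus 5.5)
The plan is to expand the integrand using the definition~\eqref{eq:bregman} of the Bregman divergence and then apply Lemma~\ref{lemma:simple}. For every $t\in[0,T)$, with $z=e^{\bar v}$ fixed, the definition gives
\[
D_h(\gamma^v(t),e^{\bar v})=h(\gamma^v(t))-h(e^{\bar v})-(\nabla h(e^{\bar v}))^\top\bigl(\gamma^v(t)-e^{\bar v}\bigr).
\]
The function $t\mapsto \gamma^v(t)$ is continuous and $T$-periodic. Since $h\in C^1$, the composition $h(\gamma^v(\cdot))$ is continuous on $[0,T]$ and hence integrable, so each of the three terms can be averaged separately over one period.

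Averaging, the first term gives $\frac1T\int_0^T h(\gamma^v(t))\,dt$. The second term is constant in $t$ and gives $h(e^{\bar v})$. Their difference is exactly $\text{GOE}(v)$ as defined in the Introduction, where for the system~\eqref{eq:intro_miso_system} the output map does not depend on $u$.

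For the third term, the vector $\nabla h(e^{\bar v})$ does not depend on $t$, so it can be moved outside the integral:
\[
\frac1T\int_0^T(\nabla h(e^{\bar v}))^\top(\gamma^v(t)-e^{\bar v})\,dt=(\nabla h(e^{\bar v}))^\top\Bigl(\overline{\gamma^v}-e^{\bar v}\Bigr).
\]
By Lemma~\ref{lemma:simple} the right-hand side is zero. Combining the three averages yields the claimed identity.

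There is no real obstacle here. The only points that need attention are the integrability of $h\circ\gamma^v$, which continuity settles, and the fact that the linear term vanishes, which is precisely the content of Lemma~\ref{lemma:simple}. I would also note that, because $D_h\geq 0$ for convex $h$, this identity gives a second proof of the convex case of Proposition~\ref{prop:convex_h}.
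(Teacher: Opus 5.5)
Your proposal is correct and follows the paper's own proof essentially verbatim: you expand $D_h(\gamma^v(t),e^{\bar v})$ using the definition, average over one period, and invoke Lemma~\ref{lemma:simple} to eliminate the linear term. You also justify the integrability of $h\circ\gamma^v$ from the continuity of $\gamma^v$ and $h\in C^1$, a small detail that the paper leaves implicit.
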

In other words,
$\text{GOE}(v)$  is   the average
Bregman divergence between the entrained periodic orbit~\(\gamma^v(t)\)
and the equilibrium~\(e^{\bar v}\) generated by the averaged input. Thus,~GOE is not just a performance criterion,
but also a geometric quantity. 
Since~\(D_h(x,z)\geq 0\) for convex~\(h\) (see~\eqref{eq:bregman}), this representation
provides another proof that convexity of~$h$ implies~\(\mathrm{GOE}(v)\geq 0\).

  \begin{IEEEproof}
Applying~\eqref{eq:bregman}
with~$g=h$, 
$
x=\gamma^v(t)$, and~$z=e^{\bar v}$ gives 
\begin{align*}
D_h(\gamma^v(t),e^{\bar v})
&=
h(\gamma^v(t))-h(e^{\bar v})
-(\nabla h(e^{\bar v}))^\top
\left(\gamma^v(t)-e^{\bar v}\right).
\end{align*}
Averaging over one period and using 
Lemma~\ref{lemma:simple}
yields
\begin{align}\label{eq:goe_BREGMAN}
\frac1T\int_0^T
D_h(\gamma^v(t),e^{\bar v})\,dt
&=
\frac1T\int_0^T h(\gamma^v(t))\,dt
-h(e^{\bar v})  ,
\end{align}
and this completes the proof.
\end{IEEEproof}

  \begin{Remark}
  Consider the simplex
\[\mathbb S:=\{x\in\R^n : x_i>0 \text{ for all } i 
\text{ and } \sum_{i=1}^n x_i=1\},
\]
and define~$h:\mathbb S \to\R$  by~$h(x):=\sum_{i=1}^n x_i\log(x_i)$, i.e., the negative
entropy of~$x$. 
  Then~\eqref{eq:bregman} gives
  \begin{align*}
D_h(x,z)
&=\sum_{i=1}^n x_i\log(x_i/z_i)-\sum_{i=1}^n  x_i+\sum_{i=1}^n  z_i\\
&= \sum_{i=1}^n  x_i\log(x_i/z_i),
\end{align*}
i.e. the Kullback-Leibler~(KL) divergence~$D_{KL}(x,z) $
between~$x$ and~$z$.
  
In the master equation, that has found numerous applications in many fields of science (see, e.g. the monograph~\cite{haag2017modelling}), the state variable~$x_i(t)$ represents the probability that the system is in configuration~$i$ at time~$t$, and in particular~$\sum_{i=1}^n x_i(t)=1$ for all~$t\geq 0$. If we define the output map~$h$  as the negative entropy of the state vector 
then~\eqref{eq:goe_BREGMAN} becomes 
\begin{align*}
\text{GOE}(v)= \frac1{T}\int_0^T 
D_{KL}(\gamma^v(t) \| e^{\bar v}) \, dt,  
\end{align*}
so in this case   GOE is the average KL  divergence   between the periodically forced orbit  and the averaged equilibrium. This may suggest  a possible connection between~GOE and  topics like    entropy production in nonequilibrium statistical mechanics (see e.g.~\cite{kolchinsky2026cycleaffinitywindinglocalize} and the references therein).
  \end{Remark}

\subsection{Quadratic output functions}
From here on 
we  consider the    special   case where the output function 
is a  quadratic form, that is,
\be\label{eq:quad}
h(z)=z^\top Q z,\text{ with }  Q\in\R^{n\times n}.
\ee
 We assume throughout without loss of generality that~$Q$ is symmetric. 
 \begin{corollary}\label{prop:goe_var}
     Consider the system~\eqref{eq:intro_miso_system} with the quadratic output mapping~\eqref{eq:quad}. Then 
 \be\label{eq:goe_whight}   
 \mathrm{GOE}(v)
    =
    \frac{1}{T}\int_0^T
    \left(\gamma^v(t)-e^{\bar v}\right)^\top
    Q
    \left(\gamma^v(t)-e^{\bar v}\right)
    \,dt  . 
\ee
Furthermore,\begin{align}\label{eq:withtrace}
 \mathrm{GOE}(v)
    &=\trace( Q \Sigma^v),
\end{align}
with~$\Sigma^v \in\R^{n\times n}$  defined  in~\eqref{eq:mat_E}. 
 \end{corollary}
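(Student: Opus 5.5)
The plan is to use the Bregman-divergence representation from the preceding proposition, since $h(z)=z^\top Qz$ is $C^1$. For this quadratic $h$ we have $\nabla h(z)=2Qz$, using the symmetry of $Q$. We therefore first compute $D_h(x,z)$ explicitly:
\[
D_h(x,z)=x^\top Qx-z^\top Qz-2z^\top Q(x-z)=x^\top Qx-2z^\top Qx+z^\top Qz=(x-z)^\top Q(x-z).
\]
Substituting $x=\gamma^v(t)$ and $z=e^{\bar v}$ and averaging over one period then gives~\eqref{eq:goe_whight} immediately.

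A direct route, avoiding the Bregman proposition, is also available. Write $\gamma^v(t)=e^{\bar v}+\delta(t)$ and expand
\[
h(\gamma^v(t))=h(e^{\bar v})+2(e^{\bar v})^\top Q\delta(t)+\delta(t)^\top Q\delta(t).
\]
The cross term averages to zero because $\bar\delta=0$ by Lemma~\ref{lemma:simple}. This also shows that, for quadratic $h$, the expansion of Proposition~\ref{prop:perturb} is exact, with no $o(\varepsilon^2)$ remainder.

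For~\eqref{eq:withtrace}, repeat the trace manipulation from the proof of Proposition~\ref{prop:perturb}. Write the scalar $\delta^\top Q\delta$ as $\trace(Q\delta\delta^\top)$ using the cyclic property. Then exchange the trace with the time integral, which is valid since the trace is linear. Finally, recognize $\frac1T\int_0^T\delta\delta^\top\,dt$ as $\Sigma^v$ from~\eqref{eq:mat_E}.

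There is no real obstacle here. The only points needing care are that symmetry of $Q$ is used in the gradient formula, and that integrability holds because $\gamma^v$ is bounded for $v\in\mathbb U$.
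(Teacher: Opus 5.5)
Your proposal is correct, and your direct route is essentially the paper's proof. The paper observes that $\nabla^2 h\equiv 2Q$, so the expansion in Proposition~\ref{prop:perturb} (with $\varepsilon=1$, $w=v-\bar v$) is exact with the linear term vanishing by Lemma~\ref{lemma:simple}, and the trace form follows from the same cyclic-trace step; your Bregman route, using $D_h(x,z)=(x-z)^\top Q(x-z)$ exactly, is a repackaging of this same mechanism that avoids any remainder or $\varepsilon$-rescaling argument, so either version works.
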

Note that~\eqref{eq:goe_whight} implies that $\text{GOE}(v)$ is a  ``weighted variance'' of the periodic orbit.

   \begin{IEEEproof}
In this case,~$\nabla^2 h(x)=2Q$ for all~$x$. The analysis in  Proposition~\ref{prop:perturb} becomes exact with the~$o(\varepsilon^2)$ term
equal to zero, and this completes the proof. 
\end{IEEEproof}

Since~$\gamma^v(t),e^{\bar v} \in \mathcal  C$, 
 we have that~$\gamma^v(t)-e^{\bar v}\in \mathcal C$,  and thus  if~$Q
 $ is positive-definite   [negative-definite]   when restricted to~$\mathcal C$ then~\eqref{eq:goe_whight}   
 implies that $\text{GOE}(v)>0$
 [$\text{GOE}(v)<0$] for any nonconstant periodic solution~$\gamma^v $. 
  
 \subsubsection{Frequency-based Approach}

For the case of a quadratic output function, it is also possible to give a frequency-based analysis of~GOE.
Define~$G:\C
\to\C^{n\times m}$  by 
\be\label{eq:def_Gs}
G(s):=  ( s I_n -A)^{-1}B,
\ee
i.e., the transfer function from the input to the state of~\eqref{eq:intro_miso_system}.

Consider the control  
\be\label{eq:multick}
v(t)=\bar v+\sum_{k\in \mathbb K}  c^k \sin (  k \omega  t+\eta_k),
\ee
with~$\omega>0$, 
$\eta_k\in[0,2\pi)$,
$\bar v,c^k\in\R^m\setminus\{0\}$,
and~$\mathbb K$ is a (finite or infinite) set of positive integers such that~$v $ is well-defined. Note that~$v$  is~$T$-periodic with~$T:=2\pi/\omega$, and with  time average~$\bar v$. 
 
\begin{proposition}\label{prop:impulse}
    Consider the system~\eqref{eq:intro_miso_system} with  
     a quadratic output function~\eqref{eq:quad}. For the 
 control~\eqref{eq:multick}, we have 
\be\label{eq:goe_exp}
GOE(v) = \frac{1}{2} 
    \sum_{k\in\mathbb K}  
(c^k)^\top 
\Re\left( G^*(\mathrm{i}k\omega ) Q  
 G(\mathrm{i}k\omega) \right) c^k.
\ee
\end{proposition}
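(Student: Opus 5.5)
We start from Corollary~\ref{prop:goe_var}, which expresses the gain as $\mathrm{GOE}(v)=\frac1T\int_0^T (\gamma^v(t)-e^{\bar v})^\top Q(\gamma^v(t)-e^{\bar v})\,dt$. So it suffices to compute the deviation $z(t):=\gamma^v(t)-e^{\bar v}$ explicitly and then evaluate the time average of a quadratic form in $z$. As in the proof of Proposition~\ref{prop:perturb}, $z$ is the unique $T$-periodic solution of $\dot z=Az+Bw$, where $w(t):=\sum_{k\in\mathbb K}c^k\sin(k\omega t+\eta_k)$ has zero mean.

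\textbf{Step 1: the entrained deviation.} Write each harmonic as $c^k\sin(k\omega t+\eta_k)=\Im\bigl(c^k e^{\mathrm{i}\eta_k}e^{\mathrm{i}k\omega t}\bigr)$. Because $A$ is Hurwitz, $\mathrm{i}k\omega$ is not an eigenvalue of $A$. Hence the unique periodic response to $e^{\mathrm{i}k\omega t}c^k e^{\mathrm{i}\eta_k}$ is $G(\mathrm{i}k\omega)c^k e^{\mathrm{i}\eta_k}e^{\mathrm{i}k\omega t}$; this is checked by direct substitution into $\dot z=Az+Bw$. Since $A$ and $B$ are real, linearity and uniqueness of the periodic solution give
\[
z(t)=\sum_{k\in\mathbb K}\Im\bigl(a_k e^{\mathrm{i}k\omega t}\bigr),\qquad a_k:=G(\mathrm{i}k\omega)c^k e^{\mathrm{i}\eta_k}\in\C^n .
\]
For infinite $\mathbb K$ this series needs a justification. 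I would use that $\|G(\mathrm{i}k\omega)\|$ is bounded uniformly in $k$ (indeed it decays like $1/k$). Then square-summability of the $c^k$, which is implied by $v$ being well defined and in $L^2$, gives $L^2$ convergence of the series for $z$. This is the main technical point, and I would handle it by Parseval / $L^2$ continuity of the input-to-periodic-state map.

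\textbf{Step 2: averaging the quadratic form.} Writing $\Im(\zeta)=(\zeta-\bar\zeta)/(2\mathrm{i})$, we have $z^\top Q z=\sum_{k,\ell}\Im(a_ke^{\mathrm{i}k\omega t})^\top Q\,\Im(a_\ell e^{\mathrm{i}\ell\omega t})$. For positive integers $k,\ell$, the time average over $[0,T]$ of $e^{\mathrm{i}(\pm k\pm \ell)\omega t}$ vanishes unless the exponent is zero. So all cross terms with $k\neq\ell$ drop out, as do the terms $e^{\pm 2\mathrm{i}k\omega t}$. What remains is
\[
\frac1T\int_0^T \Im(a_ke^{\mathrm{i}k\omega t})^\top Q\,\Im(a_ke^{\mathrm{i}k\omega t})\,dt=\tfrac14\bigl(a_k^*Qa_k+a_k^\top Q\bar a_k\bigr)=\tfrac12\Re(a_k^*Qa_k),
\]
using that $Q$ is real symmetric, so that $a_k^\top Q\bar a_k=\overline{a_k^*Qa_k}$. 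For infinite $\mathbb K$, the interchange of sum and average is exactly Parseval's identity for the $L^2$-convergent series from Step~1.

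\textbf{Step 3: simplification.} The phase factor cancels, $|e^{\mathrm{i}\eta_k}|^2=1$, so $a_k^*Qa_k=(c^k)^\top G^*(\mathrm{i}k\omega)QG(\mathrm{i}k\omega)c^k$. Since $c^k$ is real, the real part of this scalar equals $(c^k)^\top\Re\bigl(G^*(\mathrm{i}k\omega)QG(\mathrm{i}k\omega)\bigr)c^k$. Summing over $k\in\mathbb K$ yields~\eqref{eq:goe_exp}.
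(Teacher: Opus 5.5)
Your proof is correct and takes essentially the same route as the paper: you write the entrained deviation harmonic by harmonic via $G(\mathrm{i}k\omega)$, eliminate the cross terms by orthogonality of the harmonics, and average each harmonic separately. The differences are cosmetic (you average the scalar form in~\eqref{eq:goe_whight} using $\Im\zeta=(\zeta-\bar\zeta)/(2\mathrm{i})$, while the paper computes $\Sigma^v$ from the real and imaginary parts of $G(\mathrm{i}k\omega)$ and then uses $\mathrm{GOE}(v)=\trace(Q\Sigma^v)$ with the cyclic trace property), and your $L^2$/Parseval remark justifies the infinite-$\mathbb K$ case, which the paper leaves implicit.
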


This provides a spectral decomposition of~GOE, and shows that  each harmonic contributes independently to~GOE. Many practical systems are identified experimentally through frequency response functions without identifying the state space representation, and 
Proposition~\ref{prop:impulse} implies GOE can be estimated directly from such frequency-response measurements.

\begin{IEEEproof}
Let~$\phi_k(t):=\exp(\mathrm{i}(k\omega t+\eta_k))$, so that~$\Im(\phi_k(t))=\sin(k\omega t+\eta_k)$.
 The~$T$-periodic  solution 
  corresponding to~$v$ in~\eqref{eq:multick}
  is
\[
\gamma^v(t)=G(0)\bar v + \sum_{k \in \mathbb K}   \Im (\phi_k(t) G(\mathrm{i} k \omega  ) )
       c^k  .
\]
Eq.~\eqref{eq:mat_E} implies that~$\Sigma^v$
is equal to
\[
  \frac{1}{T}\int_0^T  
    \sum_{k \in \mathbb K} \Im (\phi_k(t) G(\mathrm{i} k \omega  ) )
       c^k  (\sum_{\ell \in \mathbb K}  \Im (\phi_\ell(t) G(\mathrm{i} \ell \omega  ) )
       c^\ell )^\top 
    \,dt,
\]
and using orthogonality of the harmonics yields 
\[
\Sigma^v= \frac{1}{T}\int_0^T  
    \sum_{k  \in \mathbb K } \Im (\phi_k(t) G(\mathrm{i} k \omega  ) )
       H_k   (\Im (\phi_k(t) G(\mathrm{i} k \omega  ) ))
        ^\top 
    \,dt  , 
\]
where~$H_k:=c^k(c^k)^\top.$
Simplifying this using the identities~$\Im(z_1 z_2)=
\Re(z_1)\Im(z_2)
+\Im(z_1)\Re(z_2)$, and 
$\frac1{T}\int_0^T \sin^2(k\omega t+\eta_k )\,dt= \frac1{T}\int_0^T \cos^2(k\omega t+\eta_k )\,dt = 1/2 $
gives
\begin{align*}
\Sigma^v  = 
\frac{1}{2} 
    \sum_{k \in \mathbb K } &\left (
 \Im(G(\mathrm{i}k\omega) )H_k 
   \Im(G^\top(\mathrm{i}k\omega))\right .
   \\& +\left . 
   \Re(G(\mathrm{i}k\omega) )H_k 
   \Re(G^\top(\mathrm{i}k\omega)
   ) \right ).
\end{align*}
 Thus, 
\[
\Sigma^v= \frac{1}{2} 
    \sum_{k \in \mathbb K}
\Re\left(  G(\mathrm{i}k\omega )H_k 
 G^*(\mathrm{i}k\omega) \right ) .
\]
Using~\eqref{eq:withtrace}
gives~$\text{GOE}(v)= \trace( Q \Sigma^v)$, and using the cyclic property of the trace operator  completes the proof. 
\end{IEEEproof}

\begin{example}\label{eq:exa_posi_non_concvex}
    Consider the system~\eqref{eq:intro_miso_system} with~$n=2$, $m=1$, $A=\begin{bmatrix} -1&0\\0&-2\end{bmatrix}$, $B=\begin{bmatrix} 1\\1\end{bmatrix}$, and~$h(x)=x^\top Q x$, with~$Q=\diag(1,-1)$. The controllable subspace is
    \begin{align*}
  \mathcal C &=   \text{span}
    (\begin{bmatrix} B &AB\end{bmatrix})=\text{span}(\begin{bmatrix} 1 &-1\\
1&-2\end{bmatrix} )  =\R^2.
    \end{align*}
Thus,~$h$ is not convex nor concave on~$\mathcal C$. 
A calculation gives~$G(s)=(sI_2-A)^{-1}B=\begin{bmatrix}
    \frac1{s+1}\\\frac1{s+2}
\end{bmatrix}$.
Substituting this in~\eqref{eq:goe_exp} gives
\begin{align*}
  GOE(v)  
 &=\frac1{2}
  \sum_{k\in\mathbb K} \frac{3(c^k)^2}{(1+k^2\omega^2)(4+k^2\omega^2)},
\end{align*}
so~$\text{GOE}(v)> 0$ for any~$T$-periodic control~$v$. Thus, strict convexity of~$h$ is sufficient, but not necessary to guarantee  a positive~GOE for all~$v$.
\end{example}

For the case of a quadratic output, 
Proposition~\ref{prop:impulse} implies the following characterization.
\begin{proposition}\label{prop:iff_for_quad}
    Consider the system~\eqref{eq:intro_miso_system} with  
     a quadratic output function~\eqref{eq:quad}. Then the following two conditions are equivalent:   \begin{enumerate}
     \item $\text{GOE}(v)\geq0 $ for any periodic control~$v$; 
     \item  The~$m\times m$ matrix
  $\Re\left (  G^*(\mathrm{i}s)Q G(\mathrm{i}s)\right) $ is nonnegative-definite  for all~$s> 0$.  
\end{enumerate}
\end{proposition}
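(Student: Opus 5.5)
The plan is to prove both implications directly from Proposition~\ref{prop:impulse}. For a general $T$-periodic control $v\in\mathbb U$ (measurable and essentially bounded), I would extend that formula by Fourier expansion. Write $w:=v-\bar v$, which has zero mean and lies in $L^2([0,T];\R^m)$. Its Fourier series can be written in the form
\[
w(t)=\sum_{k\geq 1} c^k\sin(k\omega t+\eta_k),
\]
or, more conveniently, as $\sum_{k\geq1}(a^k\cos k\omega t+b^k\sin k\omega t)$ with $a^k,b^k\in\R^m$. This is needed because a vector $c^k$ with a single common phase $\eta_k$ does not represent a general vector harmonic when $m>1$.

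The first step is therefore to rederive the formula of Proposition~\ref{prop:impulse} with complex amplitudes. Put $\hat w_k:=b^k+\mathrm{i}a^k$, so that $w(t)=\sum_k \Im(e^{\mathrm{i}k\omega t}\hat w_k)$. The same computation as in that proof (orthogonality of harmonics, averages of $\sin^2$, $\cos^2$ and $\sin\cos$) should give
\[
\mathrm{GOE}(v)=\frac12\sum_{k}\Re\bigl(\hat w_k^* G^*(\mathrm{i}k\omega)QG(\mathrm{i}k\omega)\hat w_k\bigr).
\]
This requires justifying the interchange of sum and integral. The Fourier series of $w$ converges in $L^2$. The map $w\mapsto\gamma^w$ (the periodic solution with zero mean) is a bounded linear operator from $L^2$ into $L^2$, since $A$ is Hurwitz and $\|G(\mathrm{i}k\omega)\|$ is uniformly bounded. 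Hence $\Sigma^v$ is the $L^2$ limit of the partial sums, and $\mathrm{GOE}(v)=\trace(Q\Sigma^v)$ by Corollary~\ref{prop:goe_var}.

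A key observation is that for a Hermitian-form argument, $\Re(z^*Mz)=z^*\bigl(\tfrac12(M+M^*)\bigr)z$. Here $M=G^*QG$ is itself Hermitian because $Q$ is symmetric, so $z^*Mz$ is already real. Writing $M=R+\mathrm{i}S$ with $R=\Re(M)$ symmetric and $S$ skew-symmetric, we get $z^*Mz=p^\top Rp+q^\top Rq+2p^\top S q$ for $z=p+\mathrm{i}q$.

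\textbf{(2)$\Rightarrow$(1).} This is the main obstacle, because of the cross term $p^\top Sq$. When all harmonics share a phase (the setting of \eqref{eq:multick}), $z$ is a real multiple of a real vector up to a unit complex factor, and the contribution reduces to $(c^k)^\top R c^k\ge0$, so (1) follows termwise. For general vector-valued inputs I would first try to verify that the Proposition's intended class of ``periodic controls'' is that of \eqref{eq:multick}, and argue within that class. If the class is general, condition (2) as stated controls only $R$, and one would need $S=0$. I expect this may fail when $m>1$, in which case the statement must be read for the class \eqref{eq:multick} (or for $m=1$, where $S=0$ automatically).

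\textbf{(1)$\Rightarrow$(2).} I would argue by contradiction. Suppose that for some $s_0>0$ there is $c\in\R^m$ with $c^\top\Re(G^*(\mathrm{i}s_0)QG(\mathrm{i}s_0))c<0$. Take $\omega:=s_0$, $\mathbb K=\{1\}$, $c^1=c$, and any $\bar v\ne0$. Then $v(t)=\bar v+c\sin(\omega t)$ is $T$-periodic with $T=2\pi/s_0$, and Proposition~\ref{prop:impulse} gives $\mathrm{GOE}(v)=\tfrac12 c^\top\Re(\cdots)c<0$, contradicting (1). Here the definiteness is understood over real vectors $c$, which is the natural meaning since $\Re(\cdot)$ is a real symmetric matrix.
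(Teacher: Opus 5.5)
Your argument is the paper's: $(1)\Rightarrow(2)$ uses the single sinusoid $\bar v+c\sin(s_0t)$ at the offending frequency, and $(2)\Rightarrow(1)$ uses termwise nonnegativity in \eqref{eq:goe_exp}. The paper applies the latter to ``any periodic control'' without comment. As you observed, however, it only reaches inputs of the form \eqref{eq:multick}, i.e., harmonics with a common phase across channels. For $m=1$ this is every periodic input, and your $L^2$ extension is then a legitimate way to cover all $v\in\mathbb U$.

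Your suspicion about the skew part is correct: for $m\ge2$ and general vector-valued inputs, $(2)\Rightarrow(1)$ fails. (A minor sign slip: with $S=\Im M$ the cross term is $-2p^\top Sq$; this does not affect anything.) For a counterexample, take $n=4$, $m=2$, $A=\diag(A_1,-2I_2)$ with $A_1=\begin{bmatrix}-0.1&1\\-1&-0.1\end{bmatrix}$, $B=\begin{bmatrix}I_2\\ I_2\end{bmatrix}$, and $Q=\diag(1,1,-0.9,-0.9)$. Set $e_\pm:=\tfrac{1}{\sqrt2}(1,\pm\mathrm{i})^\top$ and $\alpha_\pm(s):=1/(0.01+(s\mp1)^2)$. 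Since $A_1e_\pm=(-0.1\pm\mathrm{i})e_\pm$,
\[
G^*(\mathrm{i}s)QG(\mathrm{i}s)=\alpha_+(s)e_+e_+^*+\alpha_-(s)e_-e_-^*-\frac{0.9}{4+s^2}I_2 .
\]
Hence $\Re(G^*QG)=\bigl(\tfrac{\alpha_++\alpha_-}{2}-\tfrac{0.9}{4+s^2}\bigr)I_2\succ0$ for every $s>0$. On $(0,5]$ this holds because already $\tfrac12\alpha_+>\tfrac{0.9}{4+s^2}$. For $s\ge5$ it follows from convexity of $x\mapsto1/(0.01+x^2)$ on $[4,\infty)$ together with $\tfrac{1}{0.01+s^2}>\tfrac{0.9}{4+s^2}$. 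So condition 2) holds.

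Now take the circularly polarized input $v(t)=\bar v+(\cos3t,\sin3t)^\top$, i.e., $v-\bar v=\Re(\sqrt2\,e_-e^{3\mathrm{i}t})$. It gives $\mathrm{GOE}(v)=e_-^*G^*(3\mathrm{i})QG(3\mathrm{i})e_-=\tfrac{1}{16.01}-\tfrac{0.9}{13}<0$. So the proposition holds exactly under the reading you adopt ($m=1$, or controls restricted to the form \eqref{eq:multick}), and your proof is correct for that reading.
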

\begin{IEEEproof}
    If 
    condition~2) holds then~\eqref{eq:goe_exp} implies that condition~1) holds. To prove the converse implication, assume that
    condition~2) does not hold, i.e. there exists a value~$\omega>0 $   and a  vector~$c^1\in\R^m\setminus\{0\}$ such that~$(c^1)
^\top\Re(G^*(\mathrm{i}\omega)QG(\mathrm{i}\omega)) c^1 <0$.
    Now for the single sinus control~$
    v(t)=\bar v+ c^1 \sin (   \omega  t )$, which is~$T$-periodic with~$T:=\frac{2\pi}{\omega}$, Eq.~\eqref{eq:goe_exp} gives
    $\text{GOE}(v)<0$.
\end{IEEEproof}

\begin{Remark}
Suppose that~$Q$ can be decomposed as~$Q=M^\top M$, with~$M\in\R^{\ell\times n}$. Then the output is~$h(x)=x^\top M^\top Mx =|Mx|_2^2$, and~$G^*(\mathrm{i s})
Q G(\mathrm{i}s)=(MG(\mathrm{i  }s ) )^* M G(\mathrm{i}s)$. 
Therefore, $\text{GOE}(v)$ will be zero for any control in the form~$u(t)=\bar v+c \sin(s t+\eta)$ iff
$MG(\mathrm{i}s)=0$.
%
%
\end{Remark}

\subsubsection{Maximizing the  GOE}
An important application of 
the frequency-based formula for GOE 
in Proposition~\ref{prop:impulse} is that it allows 
to solve optimization problems for~GOE.
To demonstrate this, 
define~$H:\R\to\R^{m\times m}$ by 
\[
H(\omega):=\Re\left( G^*(\mathrm{i}\omega ) Q  
 G(\mathrm{i}\omega) \right) .
 \]
Since~$Q$ is symmetric, so is~$H(\omega)$.
Assume that  
\[
\omega^*:=\arg\max_{\omega>0} \lambda_{\max} (H(\omega)),
\]
exists, and let~$q^*\in\R^m$ be the corresponding eigenvector, normalized such that~$(q^*)^\top q^*=1$.

\begin{proposition}\label{prop:maxi}
    Consider the system~\eqref{eq:intro_miso_system} with  
     a quadratic output function~\eqref{eq:quad},
 and  the 
control~\eqref{eq:multick}. Consider the problem of maximizing~$\text{GOE}(v)$ subject to the  ``energy  constraint''~$\sum_{k\in \mathbb K} (c^k)^\top c^k\leq 1$. 
If~$\lambda_{\max} (H(\omega^*))>0$
then an  optimal solution 
is
\[
v^*(t)=\bar v+q^*\sin(\omega^*t), 
\]
and
\[
\text{GOE}(v^*)=\frac1{2}\lambda_{\max} (H(\omega^*)). 
\]
If~$\lambda_{\max} (H(\omega^*))\leq 0$ then an optimal solution is the constant control~$u(t)=\bar v$.
\end{proposition}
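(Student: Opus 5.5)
The plan is to rewrite the objective with Proposition~\ref{prop:impulse} and then bound it with a Rayleigh-quotient argument. For any admissible control of the form~\eqref{eq:multick}, Eq.~\eqref{eq:goe_exp} gives
\[
\text{GOE}(v)=\frac12\sum_{k\in\mathbb K}(c^k)^\top H(k\omega)c^k .
\]
Since $H(k\omega)$ is real and symmetric, each term satisfies $(c^k)^\top H(k\omega)c^k\le \lambda_{\max}(H(k\omega))\,(c^k)^\top c^k$. By the definition of $\omega^*$, we have $\lambda_{\max}(H(k\omega))\le \lambda_{\max}(H(\omega^*))=:\lambda^*$, because $k\omega>0$.

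Summing these bounds yields
\[
\text{GOE}(v)\le \frac{\lambda^*}{2}\sum_{k\in\mathbb K}(c^k)^\top c^k .
\]
If $\lambda^*>0$, the energy constraint gives $\text{GOE}(v)\le \lambda^*/2$. Next I check that this bound is attained by $v^*$. The control $v^*$ is of the form~\eqref{eq:multick} with $\omega=\omega^*$, $\mathbb K=\{1\}$, $c^1=q^*$ and $\eta_1=0$. It satisfies the energy constraint with equality, since $(q^*)^\top q^*=1$. Proposition~\ref{prop:impulse} then gives
\[
\text{GOE}(v^*)=\tfrac12 (q^*)^\top H(\omega^*)q^*=\tfrac12\lambda^* .
\]
Hence $v^*$ is optimal.

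If $\lambda^*\le 0$, the same chain of inequalities gives $\text{GOE}(v)\le 0$ for every admissible $v$. The constant control $u\equiv\bar v$ corresponds to all $c^k=0$ and has $\text{GOE}=0$, because then $\gamma=e^{\bar v}$. So the constant control is optimal. Formally, \eqref{eq:multick} requires $c^k\ne 0$, so I will note that the constant control is the degenerate member of the family, i.e.\ the case $\mathbb K=\emptyset$, and that the supremum $0$ is attained only there or by controls with $\text{GOE}=0$.

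The main obstacle is the interchange of sum and bound when $\mathbb K$ is infinite. Here I would use that $\sum_k (c^k)^\top c^k\le1$ implies absolute convergence of $\sum_k |(c^k)^\top H(k\omega)c^k|$. This holds because $\|H(\omega)\|$ is bounded on $(0,\infty)$: $G(\mathrm{i}\omega)$ is continuous and tends to $0$ as $\omega\to\infty$, since $A$ is Hurwitz. With absolute convergence, the termwise inequality may be summed. Everything else is routine.
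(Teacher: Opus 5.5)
Your proposal is correct and follows the same route as the paper: you rewrite GOE via \eqref{eq:goe_exp} as a sum of quadratic forms in the symmetric matrices $H(k\omega)$, then use this additivity together with the eigenvalue bound $\lambda_{\max}(H(k\omega))\le\lambda_{\max}(H(\omega^*))$ to show that putting all the energy into the pair $(\omega^*,q^*)$ is optimal. Your version is more explicit than the paper's brief argument, since you write out the termwise Rayleigh bound, check absolute convergence when $\mathbb K$ is infinite, and treat the constant control as the degenerate case $\mathbb K=\emptyset$.
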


In other words, the optimal strategy is to place all energy into the frequency-direction pair with largest eigenvalue.
\begin{IEEEproof}
    The fact that a single sinus control is sufficient  follows from the additive nature of the formula in~\eqref{eq:goe_exp}. For such a control, the formula becomes
$    
GOE(v) = \frac{1}{2}  
c^\top 
H(\omega) c
$,  and the constraint is~$c^\top c  \leq 1$. Using the fact that~$H(\omega)$ is symmetric implies that the maximum~GOE is attained when~$\omega=\omega^*$ and~$c=q^*$, 
and the maximal value is~$\frac1{2}(q^*)^\top H(\omega^*)q^*= \frac1{2}\lambda_{\max}(H(\omega^*))$. If~$\lambda_{\max}(H(\omega^*))\leq 0$ then  the optimal solution is a constant control. 
\end{IEEEproof}

\section{Applications}
 GOE in the system~\eqref{eq:intro_miso_system} has applications in fields such as electrical engineering, biology, ecology, and process engineering, where an important question is whether temporal fluctuations can improve the average  performance. In this section, we present two   applications that illustrate the theoretical results developed above.

\subsection{Electromagnetic Energy in an RLC Circuit}
      Consider  the series RLC circuit depicted in 
       Figure~\ref{fig:series_rlc} with input voltage~$u(t)$ (in volts).
     Define the state-vector  
\begin{equation}
    x(t):=
    \begin{bmatrix}
        q(t)\\
        i(t)
    \end{bmatrix},
\end{equation}
 where  \(q(t)\) is  the charge on the capacitor, and 
\(i(t)\) is  the current through  the circuit.
Define the   output as the energy stored in the circuit, that is, $y(t)=x^\top (t)Qx(t)$, with~$Q : =
    \begin{bmatrix}
        \frac{1}{2C} & 0\\
        0 & \frac{L}{2}
    \end{bmatrix}
$.
Then the circuit dynamics is described by~\eqref{eq:intro_miso_system}
with~$n=2$, $m=1$, 
\begin{align}\label{eq:ABmatrices_RLC}
    A=
    \begin{bmatrix}
        0 & 1\\
        -\frac{1}{LC} & -\frac{R}{L}
    \end{bmatrix} 
 , 
 \quad
    B=
    \begin{bmatrix}
        0\\
        \frac{1}{L}
    \end{bmatrix},
    \end{align}
    and~$h(z)=z^\top Qz$. Note that~$A$ is Hurwitz. 

Since~$Q$ is positive-definite, $h$ is strictly convex and we conclude that~$\text{GOE}(v)>0$ for any $T$-periodic control~$v$ that generates a nonconstant periodic orbit~$\gamma^v $. In other words, if our goal is to maximize the average energy stored in the circuit then   a nonconstant $T$-periodic voltage input is better than a constant  voltage input with the same
average. Obviously, positive~GOE holds similarly  in many linear electrical networks when the performance measure is the total stored energy.

To derive more explicit results for the~GOE in this circuit,  consider the scalar control
\be\label{eq:sin_control}
v(t)=\bar v +c\sin(\omega t+\phi) ,
\ee
with~$\bar v ,c \in\R$, $\omega>0$, and~$\phi\in[0,2 \pi)$. Note that this is~$T$-periodic with~$T:=2\pi/\omega$.
 \begin{figure}[t]
    \centering
    \begin{circuitikz}[american]
        \draw
        (0,0) to[V, invert, l_={$u(t)$}] (0,3)
              to[R, l={$R$}] (3,3)
              to[L, l={$L$}, i>^={$i(t)$}] (6,3)
              to[C, l_={$C$}] (6,0)
              -- (0,0);

        \node at (6.45,2.15) {$+$};
        \node at (6.45,0.85) {$-$};
        \node at (7.35,1.50) {$v_C(t)$};
    \end{circuitikz}
    \caption{A series RLC circuit driven by a voltage source \(u(t)\).}
    \label{fig:series_rlc}
\end{figure}

   For the RLC circuit,  the transfer function in~\eqref{eq:def_Gs} is  
$G(s)=\frac{C}{ CL s^2 +C R s +1 }\begin{bmatrix}
    1\\ s
\end{bmatrix}$, so
\[
G^*(\mathrm{i}\omega) Q G (\mathrm{i}\omega) = 
z(\omega) (\frac{1}{2C}+\omega^2\frac{L}{2}),
\]
with~$z(\omega):=\frac{C^2}{(1-CL \omega^2)^2+(CR\omega)^2}$. Using
Proposition~\ref{prop:impulse} gives
\begin{align}\label{eq:expli}
GOE(v)& =    {c^2 z(\omega)}   
   (\frac{1}{4C}+ \frac{L\omega^2}{4})\nonumber
 \\
 &= \frac{c^2}{4} 
 \frac{L+\frac1{C\omega^2} }{R^2+(L\omega-\frac1{C \omega})^2 }.
\end{align}
This implies the following. First,~$\text{GOE}(v)$ is positive, and increases quadratically with the amplitude~$c$ of the sinusoidal signal. Second,
$\text{GOE}(v)$ goes to zero as~$\omega\to\infty$. This makes sense,  
as the circuit filters out high frequencies. Third, 
the explicit expression~\eqref{eq:expli}
can also be used to find the frequency~$\omega^*$ that maximizes~$\text{GOE}(v)$. A calculation shows that~$\omega^*<\omega_0$, where~$\omega_0:=\frac1{\sqrt{L C}}$ is the resonance frequency.
   
As a numerical example, consider 
the circuit   with parameter
values
\[
R=1  \mathrm{\Omega},  \,C=2 F, \,  L= 3 H,
\]
and initial condition~$x(0)=0$.
The resonance frequency is~$\omega_0=1/\sqrt{6}$. 
Figure~\ref{fig:circ_output}
depicts the output~$y(t)=x^\top (t)Qx(t)$ as a function of~$t$ for two controls:
$u(t)=v(t)$ in~\eqref{eq:sin_control}
with~$\bar v=1$, $c=1$, $\omega=\omega_0$, and~$\phi=0$ and the control~$u(t)=\bar v$. 
  It may be seen that for the constant   [periodic] control,~$h( x(t )) $ converges to the constant~$h(e^{\bar v})=1$ [periodic orbit~$\gamma^v$]  and that the time average of~$h(\gamma^v(t))$ is larger than~$1$.
  Figure~\ref{fig:qt_vs_it}
  depicts~$x_1(t)$ vs~$x_2(t)$. It may be seen that~$x(t)$ converges to a periodic solution. The equilibrium~$e^{\bar v}$ is marked by x, and it may be seen that this is the time average  of the periodic orbit. 
\begin{figure}
    \centering
    \includegraphics[scale=0.5]{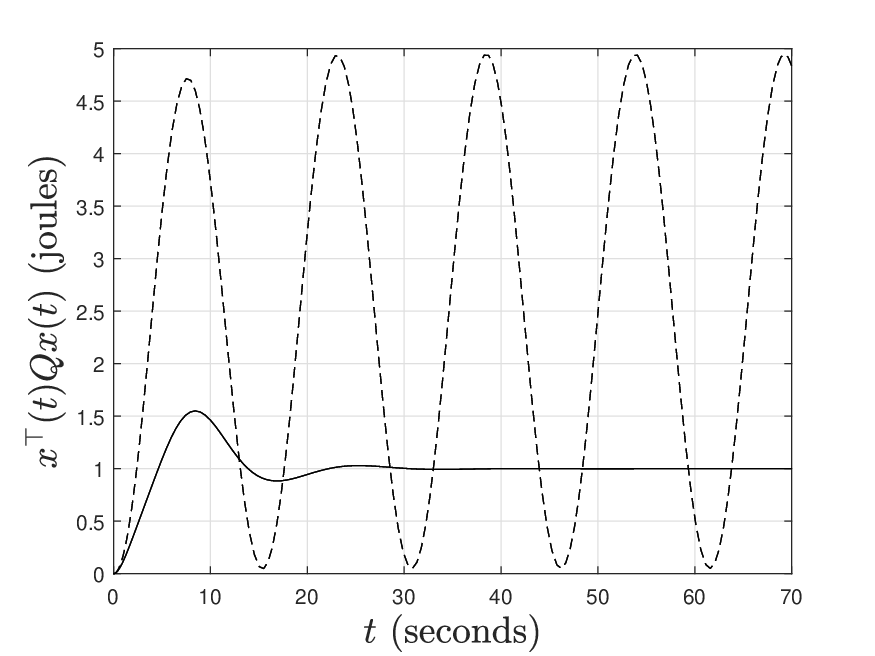}
    \caption{  Energy~$x^\top(t)Qx(t)$  in the RLC circuit as a function of time~$t$  for the control~$u(t)=1$ (solid line) and for the control~$u(t)=1+\sin(\omega_0t)$ [dashed line]. }
    \label{fig:circ_output}
\end{figure}

\begin{figure}
    \centering
    \includegraphics[scale=0.5]{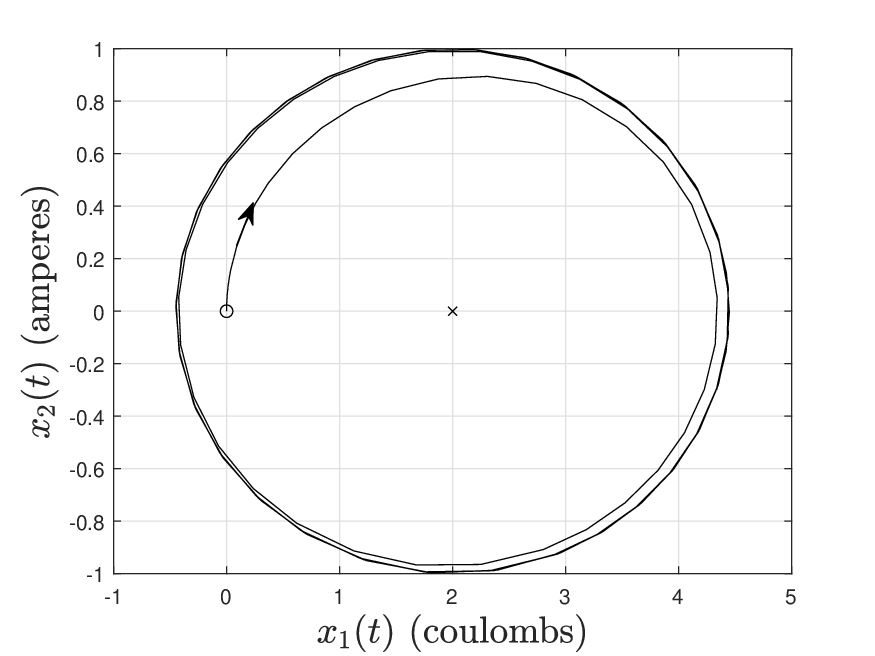}
    \caption{  State variables~$x_1(t)$ vs  $x_2(t)$ in the RLC circuit for the control~$u(t)=1+\sin(\omega_0t)$ and initial condition~$x(0)=0$.
    The value~$e^{\bar v}=\begin{bmatrix}
        2&0
    \end{bmatrix}^\top$ is marked by~x.}
    \label{fig:qt_vs_it}
\end{figure}

\subsection{Drug Response in Pharmacodynamics}
Consider the 
model
\begin{align}\label{eq:pharm_dyn}
    \dot x_1&=u-k x_1,\nonumber\\
    \dot x_2&=kx_1-kx_2 ,\nonumber\\
    &\vdots\\
    \dot x_n&=k x_{n-1}-k x_n,\nonumber
\end{align}
with~$k>0$,~$u(t)\geq 0 $ for all~$t\geq0$,  and~$x(0)\in\R^n_{\geq 0}$. This   transit compartmental model~(TCM)   is ubiquitous in pharmacodynamics (see~\cite{Koch2013} and the references therein).  The control input corresponds to drug administration, whereas the state dynamics model the transport of the drug between physiological compartments, such as the gastrointestinal tract and the bloodstream following oral dosing.
 
We assume that  the drug therapeutic effect takes place at the last compartment, that is, compartment~$n$.  A standard  model   relating drug concentration to its effect~$e$  is the  $\text{E}_{\max}$ model~\cite{Felmlee2012,emax2010}:
\be\label{eq:emax}
e(x)=\frac{\text{E}_{\max} x_n }{\text{EC}_{50} +x_n }. 
\ee
Here, 
$\text{E}_{\max}>0$ is the maximal effect, and~$\text{EC}_{50}>0$ is the concentration needed to produce the half-maximal effect. Eq.~\eqref{eq:emax} describes a saturation effect: the amount of binding possibilities to the drug at the receptor saturates, and more drug concentration will lead to a diminishing effect.

Note that~\eqref{eq:pharm_dyn} is in the form~\eqref{eq:intro_miso_system} with~$m=1$,
$B=\begin{bmatrix}
    1&0&\dots&0
\end{bmatrix}^\top$, and the matrix~$A$ Hurwitz and Metzler, so the system is a positive control system~\cite{farina2000,monotone_control_systems}, and~$\R^n_{\geq0}$ is a forward invariant set for the dynamics.
We  associate with the linear system the output map~$h(x)=e(x)$.
A natural question is whether periodic drug administration has a better therapeutic  effect, on average. Since~$h(x)$ is a   concave function of~$x$ on~$\R^n_{\geq 0}$, our theoretical  results   imply that~$GOE(u)\leq 0 $  for any nonnegative
control~$u$, that is, in this model periodic drug administration   cannot outperform the therapeutic effect relative to constant administration with the same average dose. 

   As a numerical example, consider the case~$n=3$, $k=2$, $\text{E}_{\max}=\text{EC}_{50}=1$, and
   the control~$v(t)=\bar v+c\sin(\omega t)$, with~$\bar v=1$, $c=1/2$. For the constant control~$u(t)=\bar v$ the state converges to~$e^{\bar v}=-A^{-1}B\bar v=\begin{bmatrix}
       1/2&1/2&1/2
   \end{bmatrix} ^\top$ and the output converges to~$h(e^{\bar v})=1/3$. For the control~$u(t)=v(t)$, which is~$T$-periodic, with~$T=2\pi/\omega$, the state converges to a $T$-periodic orbit~$\gamma^u$.
   Figure~\ref{fig:pharma} depicts~$h(e^{\bar v})$ [dashed line] and~$\frac{1}{T}\int_0^T h(\gamma^u(t))\, dt$ [dotted line]
   as a function of~$\omega\in[1,15]$. It may be seen that the output corresponding to the constant control is always strictly larger than the average output along the  periodic orbit.

\begin{figure}
    \centering
    \includegraphics[scale=0.5]{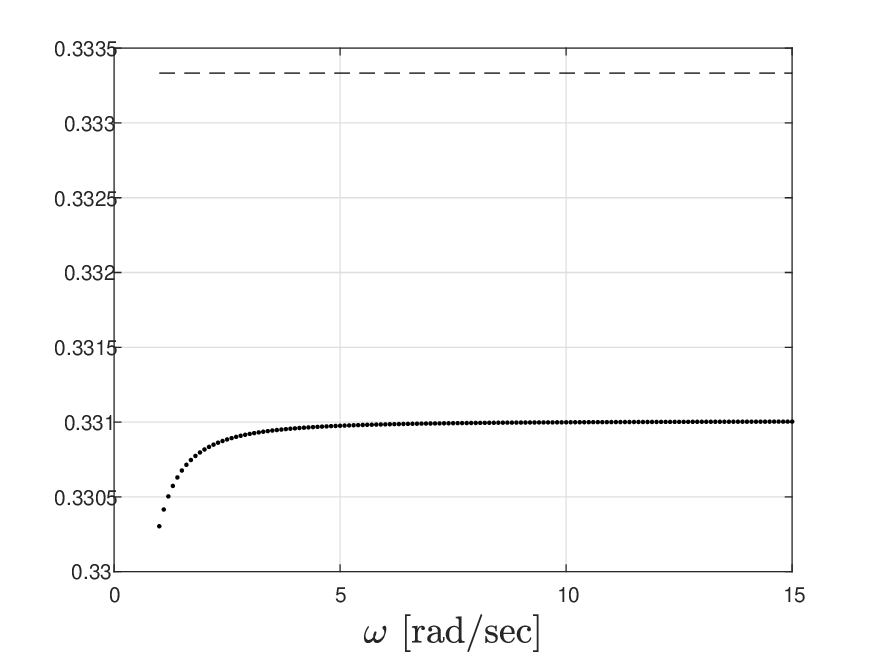}
    \caption{ Output~$h(e^{\bar v})$    [dashed line] vs the averaged output~$\frac{1}{T}\int_0^T h(\gamma^u(t))\, dt$ [dotted line] as a function of~$\omega\in[1,15]$. }
    \label{fig:pharma}
\end{figure}

  \section{Discussion}
A positive GOE  implies that periodic forcing not only synchronizes the system to the excitation, but also improves the average performance relative to a constant input with the same mean value. Understanding when this phenomenon occurs is important in numerous applications including traffic control, biological regulation, and pharmacological interventions.

For linear systems with  a linear output, periodic forcing cannot generate a non-zero  GOE because averaging over a period commutes with both the dynamics and the output map.
We analyzed~GOE in continuous-time
stable linear control systems with  a nonlinear output map, that is, Wiener systems.  Our results show that introducing a nonlinear output   changes the picture. Although the state dynamics remain linear, the average output along the entrained periodic trajectory need not coincide with the output evaluated at the average state. This mismatch is the basic mechanism that generates a nonzero~GOE.

Our results show that  the sign and magnitude 
of~GOE are determined entirely by the geometry of the output function evaluated along the difference between the periodic orbit and the equilibrium.  In particular,  a convex [concave] output map  guarantees  a nonnegative [nonpositive]~GOE for every periodic input.    

The representation of GOE as an average Bregman divergence provides an additional geometric interpretation. From this perspective, GOE measures the average discrepancy between the nonlinear output evaluated along the entrained periodic orbit and its first-order approximation around the equilibrium generated by the averaged input. In special cases, such as the negative entropy output, this representation yields an average Kullback–Leibler divergence, suggesting possible connections between~GOE, information-theoretic quantities, and nonequilibrium statistical mechanics.

For quadratic outputs, the analysis yields an explicit frequency-domain characterization of GOE. This characterization reveals that individual harmonics contribute independently to GOE and provides necessary and sufficient conditions guaranteeing its sign. It also allows to
identify the frequency and direction that maximizes~GOE under an energy constraint. These results connect the notion of~GOE to classical frequency-domain analysis and suggest practical methods for designing efficient periodic excitation protocols.
 
There are many interesting  directions for future research. One is the analysis of interconnected  Wiener systems, where interactions between subsystems may create additional mechanisms affecting~GOE. Another is the study of stochastic forcing and the relationship between~GOE and fluctuations induced by noise. It would also be interesting to extend the analysis to weakly nonlinear systems of the form
\[
\dot x = Ax + Bu + \varepsilon f(x,u),
\]
where now nonlinearities appear both in the dynamics and in the output. More generally, obtaining tractable conditions for analyzing~GOE in nonlinear control systems remains a challenging and important  open problem.

\end{document}